\newtheorem{theorem}{Theorem}[section]
\newtheorem{lemma}[theorem]{Lemma}
\newtheorem{corollary}[theorem]{Corollary}
\theoremstyle{definition}
\newcommand{\N}{\mathbb{N}}
\newcommand{\Z}{\mathbb{Z}}
\newcommand{\fc}{\mathcal{F}}
\DeclareMathOperator{\ord}{ord}
\DeclareMathOperator{\supp}{supp}
\DeclareMathOperator{\so}{\mathsf{s}}
\DeclareMathOperator{\vo}{\mathsf{v}}
\DeclareMathOperator{\Do}{\mathsf{D}}
\DeclareMathOperator{\s}{\sigma}
\begin{document}
\title{Restricted inverse zero-sum problems in groups of rank two}
\author{Wolfgang A. Schmid\\
Centre de Math{\'e}matiques Laurent Schwartz\\
UMR 7640 du CNRS\\
{\'E}cole polytechnique\\
91128 Palaiseau cedex\\
France\\
\texttt{wolfgang.schmid@math.polytechnique.fr}}

\date{}
\maketitle

\textbf{MSC 2010: }{11B30,20K01}

\begin{abstract}
Let $(G,+)$ be a finite abelian group. Then, $\so(G)$ and $\eta(G)$ denote the smallest integer $\ell$
such that each sequence over $G$ of length at least $\ell$ has a subsequence whose terms sum to $0$ and whose length is equal to and at most, resp., the exponent of the group.
For groups of rank two, we study the inverse problems associated to these constants, i.e., we investigate the structure of sequences of length $\so(G)-1$ and $\eta(G)-1$ that do not have such a subsequence.
On the one hand, we show that the structure of these sequences is in general richer than expected.
On the other hand, assuming a well-supported conjecture on this problem for groups of the form $C_m \oplus C_m$,
we give a complete characterization of all these sequences for general finite abelian groups of rank two.
In combination with partial results towards this conjecture, we get unconditional characterizations in special cases.
\end{abstract}

\section{Introduction}

The investigation of the following type of problem was initiated in the 1960's by the work of P.~Erd\H{o}s, A.~Ginzburg, and A.~Ziv~\cite{erdosginzetal61}.
Let $G$ be an additive finite abelian group. Determine the smallest integer $\ell$ such that each sequence over $G$ of length at least $\ell$ has a subsequence the sum of whose terms equals $0 \in G$ and that fulfills some additional property; in particular, restrictions on the length of the subsequence were considered (see Section \ref{sec_prel} for a more formal definition).

In the present paper, we are specifically interested in the constants
$\so(G)$ and $\eta(G)$ that arise when imposing the condition that the subsequence
has length equal to the exponent of the group and length at most the exponent of the group, respectively.
Together, with the constant $\mathsf{ZS}(G)$ (subsequence of length equal to the order of the group) and
the Davenport constant $\Do(G)$ (no restriction on the length of the subsequence, besides the trivial one that the length is not zero, to exclude the empty sequence) these are the most classical constants of this form.
The constant $\so(G)$ is a generalization (to general groups) of the original problem consider for cyclic groups in \cite{erdosginzetal61}, first investigated in detail by H.~Harborth~\cite{harborth73}.
The constant $\eta(G)$ was first investigated by P.~van Emde Boas \cite{vanemdeboas69} and J.E.~Olson \cite{olson69_2}, as a key-tool in the investigation of the Davenport constant for groups of rank two.
Parallel to the direct problem of determining the value of these constants, the associated inverse problems, i.e., the problem of determining the structure of the longest sequences that do not have a subsequence of the above mentioned type, received considerable attention as well.

We refer to the recent paper of Y.~Edel et al.~\cite{edeletal07} for a detailed exposition of the history and
applications of these two constants, among others in discrete geometry and non-unique factorization theory.
For various results on these constants and other related problems see the survey articles \cite{caro96,gaogeroldingersurvey} and the monograph \cite{geroldingerhalterkochBOOK}, in particular Chapter 5.

Here, these inverse problems for general finite abelian groups of rank two are investigated.
We give a short summary of the present state of knowledge on these invariants (in general), to illustrate that
to consider this problem for groups of rank two is a natural choice.
The direct problems for groups of rank at most two are solved (cf.~Theorem \ref{thm_dir} and the references there). Moreover, for cyclic groups, answers to the inverse problems are well-known (cf.~Theorem \ref{thm_invcyc}), yet the refined problem of determining the structure of shorter sequences
without subsequences of the respective form received considerable attention in the recent literature. We refer to, e.g., \cite{yuan07,savchevchen07,savchevchen08,nguyen09,balandraud} for results of this form; note that for cyclic groups---and only in this case---the inverse problem associated to $\eta(G)$ is, for immediate reasons, identical to the one associated to $\Do(G)$ (for recent investigations on the inverse problem associated to the Davenport constant for groups of rank two see \cite{reiherneu,GGGinverse3,WASinverse2}). Whereas, for groups of rank at least three, both the direct and the inverse problem are in general wide open (see, e.g., \cite{alondubiner95,edeletal07,edel08,schmidzhuang} for partial results and bounds), as is the problem of determining the Davenport constant (see, e.g., \cite{bhowmikschalge07} for a recent contribution).

For groups of the form $C_m^2$, there is a well-known and well-supported conjecture regarding the answers to the inverse problems (see Section \ref{sec_res} for details).
Yet, for general groups of rank two the situation was unclear.
Some examples of extremal sequences have been established (see, e.g., \cite[Proposition 5.7.8]{geroldingerhalterkochBOOK} and \cite{edeletal07}, in particular Lemma 3.2 and the remarks after Lemma 2.3, for constructions valid for more general groups as well). Our investigations show that these constructions are not exhaustive in an essential way; some expansion on the known constructions are immediate---the goal in the just mentioned work was not to give a complete list of examples---yet beyond these immediate modifications we exhibit aspects that were not noticed before. In particular for the problem associated to $\so(G)$, the structure of sequences can be richer than expected. More specifically, it was conjectured (see \cite[Conjecture 7.1]{gaogeroldingersurvey}) that, for $G$ a finite abelian group, each sequence $S$ over $G$ of length $|S|=\so(G)-1$ that has no zero-sum subsequence of length equal to  $\exp(G)$ contains some element $\exp(G)-1$ times.
Our investigations yield an example showing that groups of rank two in general do not have this property (cf.~Corollary \ref{cor_exp-1}).

Moreover, and this is the main part of the present work, we reduce the problem of solving the inverse problems for general finite abelian groups of rank two to the respective inverse problems for groups of the form $C_m^2$.
Assuming that the above mentioned conjecture for the groups $C_m^2$ holds true, we get a complete solution for rank two groups (see Theorem \ref{thm_main}). And, in combination with partial results towards this conjecture, we obtain unconditional
results in special cases (see Corollary \ref{cor_uncond}). In fact, due to a very recent result of Ch.~Reiher \cite{reiherneu}, the result regarding the inverse problem associated to $\eta(G)$ becomes unconditional.

\section{Preliminaries}
\label{sec_prel}
We recall some notation and terminology (following \cite{gaogeroldingersurvey} and \cite{geroldingerhalterkochBOOK}).

We denote by $\Z$ the set of integers, and  by $\N$ and $\N_0$ the positive and non-negative integers, respectively.
We denote by $[a,b]=\{z \in \Z \colon a\le z\le b\}$ the interval of integers.
For $k \in \Z$ and $m \in \N$, we denote by $[k]_m$ the smallest non-negative integer that is congruent to $k$ modulo $m$.

Let $(G,+)$ denote a finite abelian group; throughout, we use additive notation for abelian groups.
For a subset $G_0 \subset G$, we denote by $\langle G_0\rangle$ the subgroup generated by $G_0$.
We say that elements $e_1, \dots, e_r \in G\setminus \{0\}$ are independent
if $\sum_{i=1}^rm_ie_i=0$ with $m_i \in \Z$ implies that $m_ie_i=0$ for each $i \in [1,r]$.
We say that a subset of $G$ is a basis if it generates $G$ and its elements are independent.
For $n \in \N$, we denote by $C_n$ a cyclic group of order $n$.
For each finite abelian group $G$, there exist uniquely determined $1< n_1 \mid \dots \mid n_r$ such that
$G\cong C_{n_1}\oplus \dots  \oplus C_{n_r}$; then $r$ is the rank of $G$ and  $\exp(G)=n_r$ the exponent of $G$.

We denote by $\fc(G)$ the, multiplicatively written,  free abelian monoid over $G$, that is,
the monoid of all formal commutative products
\[S=\prod_{g\in G} g^{\vo_g(S)}\]
with $\vo_g(S)\in \N_0$.
An element $S\in \fc(G)$ is called a sequence over $G$; strictly speaking, this is not a finite sequence in the usual sense---as the ordering of the terms is disregarded---yet for the  questions considered in our context the ordering is irrelevant anyway, while this formal framework has several advantages.
We refer to $\vo_g(S)$ as the multiplicity of $g$ in $S$. Moreover, $\s(S)=\sum_{g \in G} \vo_g(S)g\in G$ is called the sum of $S$,
 $|S|=\sum_{g \in G} \vo_g(S)\in \N_0$ the length of $S$, and $\{g \in G\colon \vo_g(S) > 0\}\subset G$ the support of $S$.
We say that a sequence $S$ (over $G$) is short if $|S|\in [1,\exp(G)]$.

We denote the unit element of $\fc(G)$ by $1$ and call it the empty sequence.
If $T \in \fc(G)$ and $T \mid S$ (in $\fc(G)$), then we call $T$ a subsequence of $S$.
Moreover, we denote by $T^{-1}S$ the unique sequence $R$ with $RT=S$.
The sequence $S$ is called zero-sum free, if $\s(T)\neq 0$ for each $1\neq T \mid S$.

Having more notation at hand, we restate the definition of the invariants mentioned in the introduction in a more formal way, and mentioned a fourth one which we need in one of our arguments.
For a finite abelian group $G$, let $\ell\in \N$ be minimal with the property that each $S\in \fc(G)$ with $|S| \ge \ell$ has a subsequence $T\mid S$ such that $\s(T)=0$ and
\begin{itemize}
\item $|T|=\exp(G)$;  $\ell$ is denoted by $\so(G)$.
\item $|T| \in [1, \exp(G)]$;  $\ell$ is denoted by $\eta(G)$.
\item $|T|\ge 1$;  $\ell$ is denoted by $\Do(G)$.
\item $|T|=k\exp(G)$ for some $k\in \N$;  $\ell$ is denoted by $\so_{\exp(G)\N}(G)$.
\end{itemize}

For each map $f: G \to G'$ between finite abelian groups, there exists a unique extension to a monoid homomorphism $\fc(G) \to \fc(G')$, which we denote by $f$ as well. And, if $f$ is group homomorphism, then $\s(f(S))= f(\s(S))$ for each $S \in \fc(G)$. Moreover, for $h \in G$ and $S \in \fc(G)$, let $s_h: G \to G$ denote the map defined via $g \mapsto g+h$, and let $h+S$ denote $s_h(S)$.

\section{Main result and Discussion}
\label{sec_res}

As mentioned in the Introduction, we reduce the inverse problem for general groups of rank two to the inverse problem for groups of the form $C_m^2$ for which these problems are well-understood.

We recall two related key-notions for $C_m^2$.
Let $m \in \N$. The group $G=C_m^2$ is said to have
\begin{itemize}
\item Property \textbf{C} if each $S\in \fc(G)$ of length $|S|=\eta(G)-1$ that has no short zero-sum subsequence  equals $T^{\exp(G)-1}$ for some $T \in \fc(G)$.
\item Property \textbf{D} if each $S\in \fc(G)$ of length $|S|=\so(G)-1$ that has no zero-sum subsequence of length $\exp(G)$ equals $T^{\exp(G)-1}$ for some $T \in \fc(G)$.
\end{itemize}
Property \textbf{C} was first formulated and investigated by P.~van Emde Boas \cite{vanemdeboas69}; to be precise, he considered a slightly weaker yet essentially equivalent property (cf.~\cite[Lemma 4.7]{gao00a} for details). And, Property \textbf{D} was introduced by W.D.~Gao~\cite{gao00}.
It is well-known that if $C_m^2$ has Property \textbf{D}, than it has Property \textbf{C} (see \cite[Lemma 3.3]{gaogeroldinger03a}).

It is conjectured that for each $m\in \N$ the group $C_m^2$ has Property \textbf{D} and (thus) Property \textbf{C} (see the two just mentioned papers and, e.g., \cite[Conjecture 7.2]{gaogeroldingersurvey}).
And, very recently Ch.~Reiher \cite{reiherneu} proved that $C_m^ 2$ has Property \textbf{C} for each $m \in \N$.

We recall some partial results on Property \textbf{D}.
By a result of W.D.~Gao~\cite{gao00}, the property is multiplicative, i.e., if for $m,n\in \N$ both $C_m^2$ and $C_n^2$ have Property \textbf{D} so does $C_{mn}^2$; and an analogous assertion is known for Property \textbf{C} (also see \cite[Theorem 3.2]{WAS20} for a version of this result for arbitrary rank), reducing the problem of establishing this property for $C_m^2$  to the case where $m$ is prime. Moreover, it is known to hold true for small $m$, namely, for $m \le 10$ (see \cite{gao00,surythangadurai02}).

We formulate our main result. In combination with the above mentioned results
it yields unconditional answers to the inverse problems in certain cases, cf.~Corollary \ref{cor_uncond} for a formal statement; indeed, by Ch.~Reiher's result \cite{reiherneu} the part regarding $\eta(G)$ holds unconditionally (yet, to highlight the parallelity of the two assertions and to reflect the actual content of this paper, we formulate the result in this way).

\begin{theorem}
\label{thm_main}
Let $G$ be a finite abelian group of rank two, say, $G\cong C_m \oplus C_{mn}$ with  $m, n \in \N$ and $m \ge 2$.
Let $\{e_1,e_2\}$ be a basis of $G$ with $\ord e_2= mn$, and let $\{g_1,g_2\}$ be a generating set of $G$ with $\ord g_2=mn$.
\begin{enumerate}
\item The following sequences have length $\eta(G)-1$ and no short zero-sum susbequence.
\begin{enumerate}
\item \(e_1^{m-1} e_2^{sm  -1} (-xe_1+e_2)^{(n+1-s)m-1}\) with $\gcd\{x,m\}=1$ and $s\in [1,n]$.
\item \(g_1^{m-1} g_2^{mn -1} (-g_1+g_2)^{m-1}.\)
\end{enumerate}
If $C_m^2$ has Property \textbf{C}, then each sequence $S\in \fc(G)$ with $|S|=\eta(G)-1$ and no short zero-sum subsequence is of this form (for some basis or generating set, resp., with the above properties).
\item The following sequences have length $\so(G)-1$ and no zero-sum subsequence of length $\exp(G)$.
\begin{enumerate}
\item \(g^{t m-1 }(e_1+g)^{(n+1 - t ) m-1} (e_2+g)^{sm  -1} (-xe_1+e_2+g)^{(n+1-s)m-1}\) where $\gcd\{x,m\}=1$, $s,t \in [1,n]$, and $g \in G$.
\item \(g^{mn-1}(g_1+g)^{m-1} (g_2+g)^{mn -1} (-g_1+g_2+g)^{m-1}\) where $g\in G$.
\end{enumerate}
If $C_m^2$ has Property \textbf{D}, then each sequence $S\in \fc(G)$ with $|S|=\so(G)-1$ and no zero-sum subsequence of length $\exp(G)$ is of this form (for some basis or generating set, resp., with the above properties).
\end{enumerate}
\end{theorem}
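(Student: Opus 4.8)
The plan is to first verify that the listed sequences are extremal by direct computation, and then to establish the two characterizations by reducing to the corresponding problems over $C_m^2$ via the canonical projection $\psi\colon G \to G/mG \cong C_m^2$. For the constructions I would simply compute the lengths (each telescopes to $2m+mn-3=\eta(G)-1$ in part 1, and to $2mn+2m-4=\so(G)-1$ in part 2, using the relation $\so(G)=\eta(G)+\exp(G)-1$ from Theorem \ref{thm_dir}), and then rule out the forbidden subsequences: writing a hypothetical such $T$ in coordinates with respect to $\{e_1,e_2\}$ (resp.\ $\{g_1,g_2\}$) and imposing $\s(T)=0$ yields congruences that, combined with the length restriction, are inconsistent. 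Here $\gcd\{x,m\}=1$ is exactly what forces the multiplicity in the $e_1$-direction to be divisible by $m$, producing the contradiction. For part 2 one first observes that since $\exp(G)\cdot g=0$, adding a fixed $g$ to every term leaves the sums of all length-$\exp(G)$ subsequences unchanged, so the parameter $g$ is harmless and one may reduce to $g=0$.

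For the characterization of the $\eta$-extremal sequences, note that $\ker\psi = mG$ is cyclic of order $n$, and identify $mG\cong C_n$. Given $S$ with $|S|=\eta(G)-1$ and no short zero-sum subsequence, I would greedily extract disjoint subsequences $T_1,T_2,\dots$ with each $\psi(T_i)$ a nonempty zero-sum of length at most $m=\exp(C_m^2)$, stopping once the remainder $R$ has $\psi(R)$ with no short zero-sum subsequence, so that $|\psi(R)|\le \eta(C_m^2)-1=3m-3$. Each $\s(T_i)$ lies in $mG\cong C_n$; if some sub-collection of the $\s(T_i)$ summed to $0$, then, as each $|T_i|\le m$ and at most $n$ blocks are involved, their product would be a zero-sum subsequence of $S$ of length at most $mn=\exp(G)$, i.e.\ short, a contradiction. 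Hence the height sequence $(\s(T_i))_i$ over $C_n$ is zero-sum free, so there are at most $\Do(C_n)-1=n-1$ blocks. The length count $|S|=|R|+\sum_i|T_i|\le (3m-3)+(n-1)m=\eta(G)-1$ must then be an equality, forcing exactly $n-1$ blocks, each of length exactly $m$, and $|R|=3m-3$. Thus $\psi(R)$ is $\eta$-extremal over $C_m^2$, so Property \textbf{C} gives $\psi(R)=V^{m-1}$ with $|V|=3$, while the zero-sum-free height sequence of length $n-1$ over $C_n$ is, by the inverse result for cyclic groups (Theorem \ref{thm_invcyc}), constant and equal to a fixed generator of $C_n$.

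The remaining and hardest task is the synthesis: to show that $S$ itself, not merely $R$, is supported over exactly the three fibers $\psi^{-1}(\supp V)$ with the precise multiplicities in 1(a)/1(b). The difficulty is that the block decomposition is not canonical, and a priori the $T_i$ could be supported over other cosets; I expect to eliminate this by a rigidity argument, using the no-short-zero-sum hypothesis on subsequences that mix a single term of some $T_i$ with terms of $R$ to prove that no element of $\psi(S)$ can lie outside $\supp V$, which collapses everything onto the three fibers. Once $\supp\psi(S)$ is pinned to these three elements, the constant generator furnished by the height sequence fixes the distribution of multiplicities along each fiber, and the classification of those $V$ for which $V^{m-1}$ is $\eta$-extremal over $C_m^2$ splits into two cases according to whether the relevant pair lifts to a basis of $G$ or only to a generating set containing an element of order $mn$, yielding families 1(a) and 1(b), respectively. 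This order-bookkeeping in lifting the $C_m^2$-structure back to $G$, together with the alignment argument, is the main obstacle.

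The $\so$-statement runs in exact parallel, and I would adapt the same scheme (with the $\eta$-case available as a guide through $\so(G)=\eta(G)+\exp(G)-1$). Using $\psi$ again, the greedy extraction now keeps blocks $T_i$ of length exactly $m$ with $\psi(T_i)$ a zero-sum; a sub-collection of total length exactly $mn$ whose heights sum to $0$ would give a forbidden length-$\exp(G)$ zero-sum, so the height sequence over $C_n$ has no zero-sum subsequence of length $\exp(C_n)=n$. By $\so(C_n)=2n-1$ its length is at most $2n-2$, and the inverse Erd\H{o}s--Ginzburg--Ziv result (Theorem \ref{thm_invcyc}) forces the shape $a^{n-1}b^{n-1}$ with $a-b$ a generator. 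The length count then matches $\so(G)-1=(4m-4)+(2n-2)m$, the remainder is $\so$-extremal over $C_m^2$ so Property \textbf{D} gives $\psi(R)=W^{m-1}$ with $|W|=4$, the two distinct heights $a,b$ account for the two independent parameters $s,t$ in 2(a), and the free translation by $g$ reappears because length-$\exp(G)$ zero-sums are translation invariant. The same synthesis and alignment step, now over four fibers, is once more the crux.
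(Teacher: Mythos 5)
Your overall framework coincides with the paper's: both verify the constructions directly and then run the Inductive Method through the exact sequence $0 \to mG \to G \to G/mG \cong C_m^2 \to 0$, extract blocks whose images are short zero-sums, observe that the block-sums form a zero-sum free (resp.\ $n$-sum free) sequence over $C_n$, and use the length count to force exactly $n-1$ (resp.\ $2n-2$) blocks of length $m$ and a remainder $R$ to which Property \textbf{C} (resp.\ \textbf{D}) applies. Up to that point your argument is correct and matches the paper.

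However, you explicitly leave the crux as an acknowledged gap, and it is a genuine one. Three things are missing. First, the ``alignment'': you must show each $\varphi(S_i)$ equals $f^m$ for some $f\in\supp(T)$; the paper does this by showing that otherwise $\varphi(S_iR)$ would contain two disjoint short zero-sums (invoking Property \textbf{C}/\textbf{D} again on a length-$(3m-3)$, resp.\ $(4m-4)$, subsequence with either a high-multiplicity element or too large a support). Second, and you do not mention this step at all, one must show $|\varphi^{-1}(f)\cap\supp(S)|=1$ for each $f$: a priori the $s_im-1$ terms over a single fiber could be \emph{distinct} elements of $G$ congruent modulo $mG$; the paper rules this out by a swapping argument exploiting that every block-sum must equal the same generator $e$ of $mG$, and (for two distinct elements inside $R$) by extracting a minimal zero-sum $V$ of $\varphi(R)$ with $|V|\le 2m-1$ and forcing $\s(V)=e$. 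Third, once $S=g_1^{s_1m-1}g_2^{s_2m-1}g_3^{s_3m-1}$, pinning down the actual relations in $G$ (that $xg_1-g_2+g_3=0$, that $mg_2=e$ so $\ord g_2=mn$, and the dichotomy basis vs.\ generating set according to $x=1$ or not) is not a matter of ``order-bookkeeping'': the paper extracts it from the family of auxiliary sequences $R_a=g_1^{[xa]_m}g_2^{m-a}g_3^a$, $a\in[1,m-1]$, each of which must have sum $e$, and takes differences for $a=1,2,m-1,\lceil m/x\rceil$ (with a separate treatment of $m=2$). Finally, for part 2 you propose to redo the synthesis over four fibers, whereas the paper avoids this entirely: since some multiplicity $s_0m-1\ge\lfloor(mn-1)/2\rfloor$, Lemma \ref{lem_general}.3 translates $S$ by that element and reduces part 2 to an application of part 1 plus a short multiplicity count; without this reduction (or a substitute) your four-fiber synthesis would have to be built from scratch and is harder than the three-fiber case you already left open.
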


We point out that to avoid technicalities Theorem \ref{thm_main} is  formulated  in such a way that neither the examples of sequences are mutually exclusive (e.g., we additionally could impose the condition $x \le m/2$, cf.~Lemma \ref{lem_noshort}) nor cover all representations of sequences with respect to ``natural'' bases or generating sets (e.g., the sequence $e_1^{m-1}e_2^{m-1}(-xe_1+xe_2)^{m-1}$ over $C_m^2$ with $\gcd\{x,m\}=1$ has no short zero-sum subsequence and at first might seem to be of a different type, yet by considering it with respect to the basis $\{-xe_1+xe_2, e_1\}$ it is readily seen that it is covered by our result).
Moreover, we note that (b), in both cases, is redundant for $n=1$, since then there are no generating sets with two elements that are not a basis. Yet, also in this case, the assertion of our result is more precise than
what is immediate by assuming Properties \textbf{C} and \textbf{D}, respectively.

We end this section by stating, in a formal way, two points that we informally mentioned before.

The following result summarizes the present state of knowledge regarding a full and unconditional solution of the inverse problems associated to  $\eta(G)$ and $\so(G)$ for groups of rank two.

\begin{corollary}
\label{cor_uncond}
Let $G$ be a finite abelian group of rank two, say, $G\cong C_m \oplus C_{mn}$ with  $m, n \in \N$ and $m \ge 2$. Let $S \in \fc(G)$.
\begin{enumerate}
\item The sequences $S$ has length $\eta(G)-1$ and no short zero-sum subsequence if and only if
\begin{itemize}
\item there exist a basis $\{e_1,e_2\}$ of $G$ with $\ord e_2= mn$, $x \in \N$ with $\gcd\{x,m\}=1$, and $s \in [1,n]$ such that
\(S = e_1^{m-1} e_2^{sm  -1} (-xe_1+e_2)^{(n+1-s)m-1},\) or
\item there exists a generating set $\{g_1,g_2\}$ of $G$ with $\ord g_2=mn$ such that  \(S = g_1^{m-1} g_2^{mn -1} (-g_1+g_2)^{m-1}.\)
\end{itemize}
\item Suppose $m$ is not divisible by a prime strictly  greater than $7$.
The sequences $S$ has length $\so(G)-1$ and no zero-sum subsequence of length $\exp(G)$ if and only if
\begin{itemize}
\item there exist a basis $\{e_1,e_2\}$ of $G$ with $\ord e_2= mn$, $x \in \N$ with $\gcd\{x,m\}=1$, $s,t \in [1,n]$, and $g \in G$ such that
\(S=g^{t m-1 }(e_1+g)^{(n+1 - t ) m-1} (e_2+g)^{sm  -1} (-xe_1+e_2+g)^{(n+1-s)m-1},\)  or
\item there exists a generating set $\{g_1,g_2\}$ of $G$ with $\ord g_2=mn$ and $g \in G$ such that \(S=g^{mn-1}(g_1+g)^{m-1} (g_2+g)^{mn -1} (-g_1+g_2+g)^{m-1}.\)
\end{itemize}
\end{enumerate}
\end{corollary}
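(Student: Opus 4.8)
The plan is to derive Corollary \ref{cor_uncond} as an essentially immediate consequence of Theorem \ref{thm_main} together with the partial results on Properties \textbf{C} and \textbf{D} collected in this section. The structure of Theorem \ref{thm_main} is already a biconditional-in-disguise: the ``direct'' half (the listed sequences have the stated length and no subsequence of the prohibited type) is unconditional, while the ``inverse'' half (every extremal sequence is on the list) is contingent on $C_m^2$ having Property \textbf{C} (for $\eta$) or Property \textbf{D} (for $\so$). So the whole task reduces to verifying that these properties hold for the relevant $m$ and then quoting Theorem \ref{thm_main} verbatim.

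For part (1), I would first note that the ``if'' direction is exactly the content of Theorem \ref{thm_main}(1)(a),(b): the displayed sequences are asserted there to have length $\eta(G)-1$ and no short zero-sum subsequence. For the ``only if'' direction, the hypothesis needed is that $C_m^2$ has Property \textbf{C}; this now holds for \emph{every} $m \in \N$ by Ch.~Reiher's theorem \cite{reiherneu} recalled above. Hence the conditional clause in Theorem \ref{thm_main}(1) is always satisfied, the inverse direction applies for all $G$ of rank two, and part (1) follows with no restriction on $m$. This is the reason part (1) is stated unconditionally.

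For part (2), the ``if'' direction is again immediate from Theorem \ref{thm_main}(2)(a),(b). For the ``only if'' direction I need $C_m^2$ to have Property \textbf{D}, and here I would invoke the two partial results recalled in this section: Property \textbf{D} is multiplicative (if $C_m^2$ and $C_n^2$ have it, so does $C_{mn}^2$, by \cite{gao00}), and it holds for all $m \le 10$ (by \cite{gao00,surythangadurai02}). Writing $m = \prod_p p^{v_p}$, multiplicativity reduces verification to the prime powers $p^{v_p}$ dividing $m$; but since each prime factor is at most $7$ by hypothesis and the relevant prime powers involved are covered by the range $m \le 10$ (namely $p \in \{2,3,5,7\}$), each factor $C_{p^{v_p}}^2$ has Property \textbf{D}, and multiplicativity then yields Property \textbf{D} for $C_m^2$. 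Thus the conditional clause of Theorem \ref{thm_main}(2) is met, and part (2) follows for all $m$ with no prime factor exceeding $7$.

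The only genuine point requiring care — and the place I would slow down — is confirming that the phrase ``not divisible by a prime strictly greater than $7$'' is precisely what the combination of multiplicativity and the known range $m \le 10$ delivers. Multiplicativity reduces to prime powers, so I must check that every prime power $p^k$ with $p \in \{2,3,5,7\}$ that can occur as a factor is itself handled; for $p \in \{5,7\}$ only the first power lies in $[1,10]$, so strictly speaking multiplicativity across the full prime factorization must be applied factoring $m$ into \emph{primes} (for which $p \le 7$ suffices) rather than prime powers, invoking Property \textbf{D} for each $C_p^2$ with $p \le 7$ and then multiplying. Once that bookkeeping is made explicit, the corollary is a direct translation of Theorem \ref{thm_main}, and no further argument is needed.
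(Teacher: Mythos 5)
Your proposal is correct and takes essentially the same route as the paper: part (1) is deduced from Reiher's theorem that $C_m^2$ has Property \textbf{C} for every $m$, and part (2) from the multiplicativity of Property \textbf{D} combined with its validity for $m \le 10$, which is exactly how the paper argues. Your closing remark about applying multiplicativity over the factorization of $m$ into primes (rather than prime powers) is a correct reading of the multiplicativity statement, which holds for arbitrary, not necessarily coprime, factors, and merely makes explicit a step the paper leaves implicit.
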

\begin{proof}
1. By \cite{reiherneu} we know that each $m \in \N$ has Property \textbf{C}
(recall that Property \textbf{C} is implied by Property \textbf{B}, see \cite[Theorem 10.7]{gaogeroldinger99}, and that Property \textbf{C} is multiplicative, see \cite{gao00}).
Thus, the condition in Theorem \ref{thm_main}.1 is fulfilled and the claim follows.

\noindent
2. By \cite{gao00} and \cite{surythangadurai02} we know that if $m$ has no prime divisor strictly greater than $7$, then $m$ has Poperty \textbf{D}.
Thus, the condition in Theorem \ref{thm_main}.2 is fulfilled and the claim follows.
\end{proof}

Moreover,  as a consequence of Theorem \ref{thm_main}.2, we get that, for groups of rank two, the structure of sequences of length $\so(G)-1$ without zero-sum subsequence of length $\exp(G)$
can be more complicated than expected, though (provided Property \textbf{D} holds) for groups of rank two only slighly so.
In particular, we can answer (negatively) \cite[Conjecture 7.1]{gaogeroldingersurvey}.
\begin{corollary}
\label{cor_exp-1}
Let $G=C_m\oplus C_{mn}$ with $m \ge 2$ and $n \ge 3$. There exists a sequence $S\in \fc(G)$ of length $\so(G)-1$ that has no zero-sum subsequence of length $\exp(G)$ yet $\vo_{g}(S)< \exp(G)-1$ for each  $g\in G$.
\end{corollary}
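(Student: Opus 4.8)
The plan is to produce the required sequence explicitly by specializing the family in Theorem \ref{thm_main}.2(a), choosing the parameters so that each of the four exponents occurring there is strictly below $\exp(G)-1=mn-1$. Recall that each exponent in that construction has the shape $km-1$, and $km-1<mn-1$ holds precisely when $k<n$; so it suffices to arrange that all four values of $k$, namely $t$, $n+1-t$, $s$, and $n+1-s$, lie in $[1,n-1]$.

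Concretely, I would fix a basis $\{e_1,e_2\}$ of $G$ with $\ord e_1=m$ and $\ord e_2=mn$, and take $x=1$, $g=0$, and $s=t=2$; this is legitimate since $n\ge 3$ guarantees $2\in[1,n]$. The resulting sequence is
\[ S = 0^{\,2m-1}\, e_1^{\,(n-1)m-1}\, e_2^{\,2m-1}\, (-e_1+e_2)^{\,(n-1)m-1}, \]
which by Theorem \ref{thm_main}.2 has length $\so(G)-1$ and no zero-sum subsequence of length $\exp(G)$.

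It then remains to check the multiplicities. The four exponents are $2m-1$ and $(n-1)m-1$; one has $2m-1<mn-1$ because $n\ge 3$ gives $2<n$, and $(n-1)m-1<mn-1$ trivially. To conclude that no element of $G$ occurs with multiplicity $mn-1=\exp(G)-1$, I would verify that the four elements $0$, $e_1$, $e_2$, and $-e_1+e_2$ are pairwise distinct, so that each $g\in G$ has $\vo_g(S)$ equal to $0$ or to one of these two values; distinctness is immediate from the independence of $\{e_1,e_2\}$ together with $\ord e_1=m\ge 2$ and $\ord e_2=mn$ (for instance $-e_1+e_2=e_2$ would force $e_1=0$, while $-e_1+e_2=0$ would force $e_2\in\langle e_1\rangle$, both impossible).

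There is essentially no obstacle here: the substantive claims about $S$ are supplied by Theorem \ref{thm_main}.2, and the argument reduces to the bookkeeping above. The sole point worth stressing is the role of the hypothesis $n\ge 3$: it is exactly what allows one to push both $s$ and $t$ strictly inside $[1,n]$, so that all four exponents fall below $mn-1$, whereas for $n\le 2$ at least one of the exponents is forced to equal $\exp(G)-1$.
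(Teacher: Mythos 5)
Your proof is correct and is essentially the paper's own argument: the paper's proof consists of the single line ``Clear, by Theorem \ref{thm_main}.2.a with $s,t\in[2,n-1]$'', and you have simply made that specialization explicit (with $s=t=2$, $x=1$, $g=0$) and carried out the routine bookkeeping on the exponents and the distinctness of the four supporting elements.
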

\begin{proof}
Clear, by Theorem \ref{thm_main}.2.a with $s,t \in [2,n-1]$.
\end{proof}

\section{Proof of Theorem \ref{thm_main}}

In this section we prove Theorem \ref{thm_main}.
First, we recall and establish some auxiliary results and then turn to the actual details of the proof.

\subsection{Auxiliary results}

In the following theorem, we recall the answers to the direct problems for groups of rank at most two; in part, they are classical, yet
the results on $\so(G)$ and $\so_{\exp(G) \N}(G)$ for groups of rank two were obtained only fairly recently (see \cite[Theorem 5.8.3]{geroldingerhalterkochBOOK} building on crucial contributions by \cite{reiher07,savchevchen05}, and \cite[Theorem 6.5]{gaogeroldingersurvey}, respectively).

\begin{theorem}
\label{thm_dir}
Let $m,n\in \N$ and $G=C_m\oplus C_{mn}$.
Then $\Do(G)=m+mn-1$, $\eta(G)=2m +mn -2$, $\so_{\exp(G) \N}(G)=m+2mn-2$, and $\so(G)=2m+2mn-3$.
\end{theorem}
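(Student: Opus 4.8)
The plan is to treat the four constants in two layers. The lower bounds come from explicit extremal sequences together with two elementary padding reductions, while the matching upper bounds are the substantive input and are drawn from the cited literature. Throughout, fix a basis $\{e_1,e_2\}$ of $G$ with $\ord e_1 = m$ and $\ord e_2 = mn$, so that $\exp(G)=mn$.

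For the lower bounds, first note that the sequence $e_1^{m-1}e_2^{mn-1}$ is zero-sum free of length $m+mn-2$, giving $\Do(G)\ge m+mn-1$; and a short computation shows that $e_1^{m-1}e_2^{mn-1}(-e_1+e_2)^{m-1}$, of length $2m+mn-3$, has no short zero-sum subsequence, giving $\eta(G)\ge 2m+mn-2$. The remaining two lower bounds follow from two reductions valid for every finite abelian group. If $W$ is zero-sum free of length $\Do(G)-1$, then $0^{\exp(G)-1}W$ has length $\Do(G)+\exp(G)-2$ and admits no zero-sum subsequence whose length is a positive multiple of $\exp(G)$: any zero-sum subsequence $0^a W'$ satisfies $\s(W')=0$, hence $W'$ is empty since $W$ is zero-sum free, and so the subsequence has length at most $\exp(G)-1$. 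This yields $\so_{\exp(G)\N}(G)\ge \Do(G)+\exp(G)-1$. Similarly, if $S$ has length $\eta(G)-1$ and no short zero-sum subsequence, then $0^{\exp(G)-1}S$ has length $\eta(G)+\exp(G)-2$ and no zero-sum subsequence of length exactly $\exp(G)$: such a subsequence $0^a S'$ would force $S'$ to be a short zero-sum subsequence of $S$. Hence $\so(G)\ge \eta(G)+\exp(G)-1$. Substituting the values of $\Do(G)$ and $\eta(G)$ produces the claimed lower bounds $m+2mn-2$ and $2m+2mn-3$.

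For the upper bounds I would invoke the classical structural results: $\Do(C_m\oplus C_{mn})\le m+mn-1$ is Olson's theorem \cite{olson69_2}, and $\eta(C_m\oplus C_{mn})\le 2m+mn-2$ is due to van Emde Boas \cite{vanemdeboas69} and Olson \cite{olson69_2}. The two $\so$-type upper bounds amount to the reverse inequalities $\so_{\exp(G)\N}(G)\le \Do(G)+\exp(G)-1$ and $\so(G)\le \eta(G)+\exp(G)-1$; unlike the padding reductions above, these do \emph{not} hold for arbitrary groups and constitute the genuine content for rank two. The first is established in \cite[Theorem 6.5]{gaogeroldingersurvey}, and the value of $\so$ in \cite[Theorem 5.8.3]{geroldingerhalterkochBOOK}, resting on the deep analyses of zero-sum subsequences of prescribed length in rank-two groups due to Reiher \cite{reiher07} and Savchev and Chen \cite{savchevchen05}.

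I expect the main obstacle to be precisely the upper bound $\so(G)\le \eta(G)+\exp(G)-1$. Matching the easy lower bound here is exactly the assertion that every sequence of length $2m+2mn-3$ is forced to contain a zero-sum subsequence of length \emph{exactly} $\exp(G)$, rather than merely one of short length; controlling the lengths of the forced zero-sums, and not just their existence, is what demands the full inverse-theoretic machinery for $C_m\oplus C_{mn}$ and is substantially deeper than the corresponding bounds for $\Do$ or $\eta$.
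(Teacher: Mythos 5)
The paper states Theorem \ref{thm_dir} without proof, as a recollection of known results with references (Olson and van Emde Boas for $\Do$ and $\eta$; \cite[Theorem 5.8.3]{geroldingerhalterkochBOOK}, \cite{reiher07,savchevchen05} and \cite[Theorem 6.5]{gaogeroldingersurvey} for the two $\so$-type constants), so there is no internal argument to compare against. Your proposal is correct and consistent with this: the extremal sequences and the two padding reductions giving the lower bounds are all valid (the second padding argument is essentially Lemma \ref{lem_general}.2 of the paper with $g=0$), and you correctly identify the matching upper bounds as the deep content that must be imported from exactly the references the paper cites.
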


For cyclic groups, solutions to the inverse problems are well-known and as discussed in the Introduction meanwhile refined results---valid for shorter sequences---are known (cf., e.g., \cite[Theorems 4.3 and 7.5]{gaogeroldingersurvey} for results containing the result below and detailed references).

\begin{theorem}
\label{thm_invcyc}
Let $n \in \N$ and $S \in \fc(C_n)$.
\begin{enumerate}
\item Suppose $|S|= \eta(C_n)-1  = \Do(C_n)-1$. Then $S$ has no (short) non-empty zero-sum subsequence if and only if $S=e^{n-1}$ for some $e\in C_n$ with $\langle e \rangle=C_n$.
\item Suppose $|S|=\so(C_n)-1$. Then $S$ has no zero-sum subsequence of length $n$ if and only if $S=g^{n-1}(g+e)^{n-1}$ for $g, e\in C_n$ with $\langle e \rangle=C_n$.
\end{enumerate}
\end{theorem}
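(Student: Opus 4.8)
Throughout, recall from Theorem~\ref{thm_dir} (applied with the parameter $m=1$, so that $G\cong C_n$) that $\Do(C_n)=\eta(C_n)=n$ and $\so(C_n)=2n-1$; thus in part~1 the sequence $S$ has length $n-1$ and in part~2 length $2n-2$. Since in part~1 we have $|S|=n-1<n=\exp(C_n)$, every non-empty subsequence of $S$ is automatically short, so ``no short non-empty zero-sum subsequence'' simply means that $S$ is zero-sum free; the assertion is therefore the classical determination of the zero-sum free sequences of maximal length over $C_n$.

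\textbf{Part 1.} The implication ``$\Leftarrow$'' is immediate: if $e$ generates $C_n$, then every non-empty subsequence of $e^{n-1}$ is $e^{k}$ with $1\le k\le n-1$ and has sum $ke\neq 0$. For ``$\Rightarrow$'' I would fix an arbitrary ordering $S=g_1\cdots g_{n-1}$ and consider the $n$ partial sums $P_0=0$ and $P_i=g_1+\dots+g_i$. Zero-sum freeness forces the $P_i$ to be pairwise distinct (a coincidence $P_i=P_j$ with $i<j$ would produce the zero-sum subsequence $g_{i+1}\cdots g_{j}$), so $\{P_0,\dots,P_{n-1}\}=C_n$. The key point is that this holds for \emph{every} ordering of the terms. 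Given two terms $g,g'$, place them in adjacent positions $i,i+1$ (possible as soon as $n\ge 3$); swapping them changes only the partial sum in position $i$, from $P_{i-1}+g$ to $P_{i-1}+g'$, while all other partial sums are unchanged. Since both orderings yield the full set $C_n$ of partial sums, the differing value is in each case the unique element missing from the common part, so the two must coincide, i.e.\ $g=g'$. Hence all terms of $S$ are equal, $S=e^{n-1}$, and $\{e,2e,\dots,(n-1)e\}=C_n\setminus\{0\}$ forces $\ord e=n$; the cases $n\le 2$ are trivial.

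\textbf{Part 2.} Again ``$\Leftarrow$'' is a direct computation: a length-$n$ subsequence of $g^{n-1}(g+e)^{n-1}$ is $g^{k}(g+e)^{n-k}$ with $k\in[1,n-1]$, of sum $ng+(n-k)e=(n-k)e\neq 0$ since $1\le n-k\le n-1$ and $e$ generates. For ``$\Rightarrow$'' the crucial observation is \emph{translation invariance}: for any $a\in C_n$ and any subsequence $T\mid S$ of length exactly $n=\exp(C_n)$ one has $\s(a+T)=\s(T)+na=\s(T)$, so replacing $S$ by $a+S$ preserves the property of having no zero-sum subsequence of length $n$. I would use this to normalize $S$ so that the element of largest multiplicity is $0$; call this multiplicity $h$, and note $h\le n-1$, as otherwise $0^{n}$ would be a forbidden subsequence. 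Writing $S=0^{h}T$ with $0\nmid T$, any zero-sum subsequence of $T$ of length $\ell$ with $n-h\le\ell\le n$ could be padded with $n-\ell$ zeros to a forbidden length-$n$ zero-sum subsequence of $S$; hence $T$ has no zero-sum subsequence of length in $[\,n-h,\,n\,]$.

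The reduction to part~1 then runs as follows. Once one knows that the maximal multiplicity is $h=n-1$, the sequence $T$ has length $|T|=2n-2-h=n-1$ and no zero-sum subsequence of length in $[1,n]$; since every subsequence of $T$ has length at most $n-1<n$, this means $T$ is zero-sum free of length $n-1$, so part~1 gives $T=e^{n-1}$ with $\ord e=n$, whence $S=0^{n-1}e^{n-1}$ and, undoing the translation, $S=g^{n-1}(g+e)^{n-1}$ as claimed. The main obstacle is thus precisely the assertion that the maximal multiplicity equals $n-1$: this is the inverse form of the Erd\H{o}s--Ginzburg--Ziv theorem, which does not follow from the above formal manipulations and requires genuine additive combinatorics (for $n$ prime via the Cauchy--Davenport theorem, with the general case reduced to the prime case). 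As this is classical, I would invoke it from the literature (cf.\ the references given with Theorem~\ref{thm_invcyc}) rather than reprove it.
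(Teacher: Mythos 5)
Your proposal is correct. Note that the paper itself offers no proof of Theorem~\ref{thm_invcyc}: it is recalled as a classical result and disposed of entirely by citation (to the survey of Gao and Geroldinger), so there is no in-paper argument to measure you against, and anything you prove is strictly more than the source provides. Your part~1 is a genuine, self-contained proof: the observation that for \emph{every} ordering the $n$ partial sums $P_0,\dots,P_{n-1}$ are pairwise distinct and hence exhaust $C_n$, combined with the adjacent-transposition trick (swapping two adjacent terms changes exactly one partial sum, which must in both orderings be the unique element of $C_n$ missing from the other $n-1$ sums, forcing the two terms to coincide), is a clean and complete derivation of $S=e^{n-1}$ with $\ord e=n$; the edge cases $n\le 2$ and the requirement $n\ge 3$ for the swap are handled. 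Your part~2 correctly exploits translation invariance of length-$\exp(G)$ zero-sum subsequences (the same fact the paper isolates as Lemma~\ref{lem_general}.1) to normalize and reduce the structural claim to part~1, \emph{given} that some element has multiplicity $n-1$; you are right that this last fact is the actual content of the inverse Erd\H{o}s--Ginzburg--Ziv theorem and does not follow from the formal manipulations, and leaving it as a citation puts you at exactly the paper's own level of rigor for this statement. The only thing your write-up buys beyond the paper is an elementary proof of part~1 and a transparent reduction showing precisely where the nontrivial additive combinatorics enters in part~2.
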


In the following lemma, we collect some facts that we use and are essentially known (cf.~\cite[Lemma 2.2]{edeletal07} and \cite[Theorem 2]{gaothangadurai03}).
\begin{lemma}
\label{lem_general}
Let $G$ be a finite abelian group, $g \in G$, and $S \in \fc(G)$. Furthermore, let $n \in \N$ such that $\exp(G)\mid n$.
\begin{enumerate}
\item $S$ has a zero-sum subsequence of length $n$ if and only if $g+S$
has a zero-sum subsequence of length $n$.
\item If $S$ has no short zero-sum subsequence,  then, for $v \in [0, \exp(G)-1]$, $g^v(g+S)$ has no zero-sum subsequence of length $\exp(G)$.
\item If $\vo_{g}(S)\ge \lfloor (\exp(G)-1)/2 \rfloor$ and $S$ has no zero-sum subsequences of length $\exp(G)$, then $S$ has a subsequence $T$ of length at least $|S|-\exp(G)+1$ such that $(-g)+T$ has no short zero-sum subsequence.
\end{enumerate}
\end{lemma}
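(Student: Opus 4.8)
The plan is to prove each of the three parts of Lemma~\ref{lem_general} using elementary translation and averaging arguments, treating them in increasing order of difficulty; parts 1 and 2 are bookkeeping, while part 3 requires an actual combinatorial extraction.

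For part 1, the key observation is that a length-$n$ subsequence $T$ of $S$ and its translate $g + T$ (a subsequence of $g + S$) have sums related by $\s(g+T) = \s(T) + |T| g = \s(T) + ng$. Since $\exp(G) \mid n$ and $ng = 0$ for every $g \in G$ (as the exponent annihilates the group), we get $\s(g+T) = \s(T)$. Thus $T$ has sum zero if and only if its translate does, and because translation by $g$ is a bijection on subsequences of a fixed length, the two sequences $S$ and $g+S$ have length-$n$ zero-sum subsequences simultaneously. I would state this translation-invariance cleanly, since it is the workhorse for the whole lemma.

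For part 2, consider the sequence $g^v(g+S)$ where $v \in [0,\exp(G)-1]$, and suppose toward contradiction it had a zero-sum subsequence $T$ of length $\exp(G)$. Write $T = g^w (g+R)$ where $R \mid S$ and $w \in [0,v]$ counts how many of the appended copies of $g$ are used, so $|R| = \exp(G) - w$. The sum condition gives $0 = \s(T) = wg + \s(g+R) = wg + \s(R) + |R|g = \s(R) + \exp(G)\, g = \s(R)$, using $\exp(G) g = 0$. Hence $R$ is a zero-sum subsequence of $S$. Its length is $\exp(G) - w \in [1, \exp(G)]$ provided $R$ is nonempty, i.e. $w < \exp(G)$, which holds since $w \le v \le \exp(G)-1$; and $R$ is nonempty because if $R = 1$ then $T = g^w$ with $w = \exp(G)$, impossible as $w \le v$. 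So $R$ is a short zero-sum subsequence of $S$, contradicting the hypothesis. This is the main structural content and I expect it to be routine once the case analysis on $w$ is set up carefully.

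Part 3 is the genuine obstacle and where I would spend the effort. Set $e = \exp(G)$ and write $k = \vo_g(S) \ge \lfloor (e-1)/2 \rfloor$. The idea is to translate by $-g$ and peel off a controlled number of terms so that what remains has no short zero-sum subsequence. Concretely, I would consider $(-g) + S$, in which $g$ becomes $0$ with multiplicity $k$; a short zero-sum subsequence of the translate corresponds, after translating back, to a subsequence of $S$ summing to a multiple of $g$ with a length constraint. The strategy is: if $(-g) + S$ already has no short zero-sum subsequence, take $T = S$; otherwise, repeatedly remove short zero-sum subsequences of the translate, and the crux is to bound the total length removed by at most $e - 1$, which is where the hypothesis that $S$ has no length-$e$ zero-sum subsequence and the multiplicity bound $k \ge \lfloor (e-1)/2 \rfloor$ must be combined. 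The delicate point is that concatenating short zero-sum subsequences whose total length is a multiple of $e$ would produce a length-$e$ (or length-$ke$) zero-sum subsequence after translating back via part~1, contradicting the hypothesis; so the removed part cannot have length divisible by $e$, and using the copies of $0$ (from the translated $g$'s) one argues the removal stops having removed fewer than $e$ terms, leaving $T$ of length at least $|S| - e + 1$. Making this extraction argument precise---tracking how the abundant copies of $0$ force the process to terminate early while avoiding a forbidden length-$e$ zero-sum via Lemma~\ref{lem_general}.1---is the technical heart, and I would consult \cite{edeletal07,gaothangadurai03} to align the bookkeeping with the known form of these arguments.
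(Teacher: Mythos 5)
The paper does not actually prove this lemma; it cites \cite[Lemma 2.2]{edeletal07} and \cite[Theorem 2]{gaothangadurai03} and records the facts as ``essentially known''. So your proposal must stand on its own. Your parts 1 and 2 are complete and correct: the identity $\s(g+T)=\s(T)+|T|g$ together with $\exp(G)\mid |T|$, and the decomposition $T=g^w(g+R)$ with $w\le v\le \exp(G)-1$ forcing $R$ to be a nonempty short zero-sum subsequence of $S$, is exactly the standard argument and needs nothing more.

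Part 3, however, has a genuine gap, and the one concrete mechanism you propose is wrong. You argue that the removed part ``cannot have length divisible by $e$'' because a product of short zero-sum sequences of total length $ke$ would translate back to a zero-sum subsequence of $S$ of length $e$ or $ke$. But the hypothesis only excludes zero-sum subsequences of length exactly $e=\exp(G)$; a zero-sum subsequence of length $ke$ with $k\ge 2$ contradicts nothing. Moreover, even if lengths divisible by $e$ were excluded, that would not bound the removed length by $e-1$ (length $e+1$ is not a multiple of $e$). Crucially, your sketch never uses the hypothesis $\vo_g(S)\ge\lfloor(e-1)/2\rfloor$ quantitatively, and that is where the real content lies. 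The correct extraction runs as follows. Put $S'=(-g)+S$ and $z=\vo_0(S')=\vo_g(S)$; by part 1, $S'$ has no zero-sum subsequence of length $e$, whence $z\le e-1$. First, every nonempty zero-sum subsequence $U$ of $0^{-z}S'$ with $|U|\le e$ satisfies $|U|\le e-z-1$: otherwise $U0^{e-|U|}$ is a zero-sum subsequence of $S'$ of length $e$. Now let $W=U_1\cdots U_k$ be a maximal-length product of short zero-sum subsequences of $0^{-z}S'$, and suppose $|W|\ge e-z$. The partial sums $\sigma_j=|U_1\cdots U_j|$ increase in steps of at most $e-z-1$, so the first $\sigma_j\ge e-z$ satisfies $\sigma_j\le 2(e-z-1)\le e$, the last inequality being exactly the point where $z\ge\lfloor(e-1)/2\rfloor$ is used; then $U_1\cdots U_j0^{e-\sigma_j}$ is a forbidden zero-sum subsequence of $S'$ of length $e$. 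Hence $|W|\le e-z-1$, and $T'=(0^zW)^{-1}S'$ has no short zero-sum subsequence (by maximality of $W$ and since all zeros were removed) and length at least $|S|-e+1$; take $T=g+T'$. Without this padding-with-zeros step and the resulting bound $|U_i|\le e-z-1$, your termination claim (``the removal stops having removed fewer than $e$ terms'') is unsupported.
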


The following lemma, which for prime $m$ can be found in \cite[Section 5]{vanemdeboas69}, is needed in the proof of Theorem \ref{thm_main}.1; it gives information on the sequence $T$ appearing in the formulation of Property \textbf{C}.

\begin{lemma}
\label{lem_noshort}
Let $m \in \N$ with $m \ge 2$.
Let $T^{m-1}\in \fc (C_m^2)$ be a sequence of length $3m-3$ that has no short zero-sum subsequence.
Then $T = f_1f_2(-xf_1 +f_2)$ for a basis $\{f_1, f_2\}$ of $C_m^2$ and some $x\in [1,m-1]$ with $\gcd\{x,m\}=1$; moreover, $x \le m/2$.
In particular, for each $f \in \supp(T)$, the sequence $(f^{-1}T)^{m-1}$ is zero-sum free.
\end{lemma}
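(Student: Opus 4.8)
The plan is to first pin down the coarse shape of $T$, then show that every pair of terms of $T$ is a basis, and finally reduce the no-short-zero-sum hypothesis to a purely arithmetic condition that forces the asserted normal form. Since $|T^{m-1}|=(m-1)|T|=3m-3$, we have $|T|=3$, so write $T=abc$ with $a,b,c\in C_m^2\setminus\{0\}$ (a term equal to $0$ would itself be a short zero-sum of length $1$). If some term had multiplicity $\ge 2$ in $T$, it would occur $\ge 2(m-1)\ge m$ times in $T^{m-1}$, and $g^{m}$ is a short zero-sum because $\ord g\mid m$; hence $T$ is squarefree and $a,b,c$ are pairwise distinct. Likewise, if $\ord g<m$ for a term $g$, then (as $\ord g\mid m$ forces $\ord g\le m/2\le m-1$) the subsequence $g^{\ord g}$ is a short zero-sum; so each of $a,b,c$ has order exactly $m=\exp(C_m^2)$.

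Next I would prove that $\{a,b\}$, and by symmetry every pair, is a basis. Suppose $a,b$ are not independent. Then $H:=\langle a,b\rangle$ is a proper subgroup of $C_m^2$ containing the order-$m$ element $a$, so $H\cong C_d\oplus C_m$ with $d\mid m$ and $d<m$, whence $d\le m/2$. By Theorem \ref{thm_dir}, $\eta(H)=2d+m-2\le 2m-2$. Since $a^{m-1}b^{m-1}\in\fc(H)$ has length $2m-2\ge\eta(H)$, it has a zero-sum subsequence of length in $[1,\exp(H)]=[1,m]$, which is a short zero-sum subsequence of $T^{m-1}$ in $C_m^2$ (same exponent $m$), a contradiction. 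Hence all three pairs among $\{a,b,c\}$ are bases. Fixing the basis $\{a,b\}$, write $c=\alpha a+\beta b$; as $\{a,c\}$ and $\{b,c\}$ are bases, $\beta$ and $\alpha$ are units modulo $m$. Putting $u=[-\alpha]_m$ and $v=[-\beta]_m$ gives the relation $ua+vb+c=0$ with $u,v\in[1,m-1]$ units. Because $\{a,b\}$ is a basis, a subsequence $a^ib^jc^k$ with $0\le i,j,k\le m-1$ is a zero-sum exactly when $(i,j,k)=([ku]_m,[kv]_m,k)$; therefore ``$T^{m-1}$ has no short zero-sum'' is equivalent to the arithmetic condition $[ku]_m+[kv]_m+k>m$ for every $k\in[1,m-1]$.

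The hard part will be classifying the unit pairs $(u,v)$ satisfying this condition. I claim it forces two of the three coordinates of $(u,v,1)$ to be mutually negative modulo $m$, that is, $u=m-1$, or $v=m-1$, or $u+v=m$. The instance $k=1$ already yields $u+v\ge m$; refining this using the reciprocal indices $k=[u^{-1}]_m$ and $k=[v^{-1}]_m$ (which turn one of the first two summands into $1$) together with an elementary but delicate case analysis on the fractional parts $[ku]_m/m,[kv]_m/m$ — equivalently, a three-distance type argument, extending van Emde Boas' prime case \cite[Section 5]{vanemdeboas69} — should rule out every other configuration. In each of the three surviving cases one checks by a direct change of basis that $T$ is rewritten as $f_1f_2(-xf_1+f_2)$ for a basis $\{f_1,f_2\}$ and some $x\in[1,m-1]$ with $\gcd\{x,m\}=1$: for $v=m-1$ (resp.\ $u=m-1$) take $\{f_1,f_2\}=\{a,b\}$ (resp.\ $\{b,a\}$), and for $u+v=m$ take $\{f_1,f_2\}=\{c,a\}$.

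Finally, to obtain $x\le m/2$ I would observe that the two natural representations — using the basis $\{a,b\}$ with third term $c$, versus $\{a,c\}$ with third term $b$ — produce exponents $x$ and $m-x$, so I simply keep the smaller one. The concluding assertion is then immediate: for each $f\in\supp(T)$ the sequence $(f^{-1}T)^{m-1}$ equals one of $f_2^{m-1}(-xf_1+f_2)^{m-1}$, $f_1^{m-1}(-xf_1+f_2)^{m-1}$, or $f_1^{m-1}f_2^{m-1}$, and each of these is zero-sum free by a one-line computation in the basis $\{f_1,f_2\}$, using $\gcd\{x,m\}=1$ to force the multiplicity of the combined term to vanish.
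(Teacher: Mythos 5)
Your reduction is correct as far as it goes, and it is a genuinely different route from the paper's: you translate the hypothesis into the purely arithmetic condition $[ku]_m+[kv]_m+k>m$ for all $k\in[1,m-1]$, where $ua+vb+c=0$ with $u,v$ units. The preliminary steps are all sound --- $|T|=3$ with distinct terms of order $m$; every pair of terms is a basis (your subgroup argument via $\eta(C_d\oplus C_m)=2d+m-2\le 2m-2$ is correct, including the degenerate case $d=1$); the determination of all nonempty zero-sum subsequences of $T^{m-1}$ as $a^{[ku]_m}b^{[kv]_m}c^k$; and the endgame (the three cases $u=m-1$, $v=m-1$, $u+v=m$ each yield the normal form, the two representations give exponents $x$ and $m-x$ so one may take $x\le m/2$, and the zero-sum freeness of $(f^{-1}T)^{m-1}$ is a one-line computation).

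However, there is a genuine gap at the one step that carries all the difficulty: the claim that the condition $[ku]_m+[kv]_m+k>m$ for all $k\in[1,m-1]$ forces $u=m-1$, $v=m-1$, or $u+v=m$. You derive only the consequence $u+v\ge m$ from $k=1$, name two further necessary conditions from $k=[u^{-1}]_m$ and $k=[v^{-1}]_m$, and then state that a ``delicate case analysis'' or ``three-distance type argument'' \emph{should} rule out everything else. That claim is true (e.g.\ for $m=7$ the pair $(u,v)=(3,5)$ passes $k=1,2$ but fails at $k=3$), but it is precisely the content of the lemma, and for composite $m$ it is not routine --- van Emde Boas's argument covers only prime $m$, and nothing in your sketch indicates how to handle general $m$. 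As written, the proof is incomplete. For comparison, the paper sidesteps this arithmetic classification entirely: it first extracts from $T^{m-1}$ a minimal zero-sum subsequence of length $2m-1=\Do(C_m^2)$ (via \cite[Lemma 4.4]{WAS20}) and then invokes the known structure theorem for minimal zero-sum sequences of maximal length over $C_m^2$ (\cite[Theorem 1]{WAS15} and \cite[Proposition 4.1.2]{gaogeroldinger03a}) to conclude $T=e_1(ae_1+e_2)(be_1+e_2)$ directly. To complete your approach you would either have to supply the full analysis of the unit pairs $(u,v)$, or fall back on citing such a structure result as the paper does.
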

\begin{proof}
Obviously $|\supp(T)|=3$.
By \cite[Lemma 4.4]{WAS20}, the sequence $T^{m-1}$ has a minimal zero-sum subsequence $U$ of length $2m-1$.
By \cite[Theorem 1]{WAS15} and \cite[Proposition 4.1.2]{gaogeroldinger03a}, we know that $U= e_1^{m-1}\prod_{i=1}^{m} (a_ie_1+e_2)$ for some basis $\{e_1, e_2\}$ of $C_m^2$. Thus, $T=e_1(ae_1+e_2)(be_1+e_2)$ with distinct $a, b \in [0,m-1]$, say $a> b$.
Obviously both $\{e_1, ae_1 + e_2\}$ and $\{e_1, be_1 + e_2\}$ are a basis of $C_m^2$, and by \cite[Corollary 1]{WAS15}, $\{b e_1 + e_2, ae_1 + e_2\}$ is a basis as well, which implies that $\gcd\{ a - b , m \} = 1$.
Furthermore,  $be_1 + e_2 = -(a-b)e_1 + (ae_1 + e_2)$ and $a e_1+e_2 = -(m+b-a)e_1 + (be_1+e_2)$.
Since $0 \le \min \{a-b, (m+b-a)\}\le m/2$, the claim follows.
The ``in particular''-statement is a direct consequence of the explicit description.
\end{proof}

The following technical result is needed in the proof of Theorem \ref{thm_main}.2.
\begin{lemma}
\label{lem_2m}
Let $m \in \N$ with $m \ge 2$.
Let $T^{m-1}\in \fc (C_m^2)$ be a sequence of length $4m-4$ that has no zero-sum subsequence of length $m$.
Then for each $f \in \supp(T)$, the sequence $(f^{-1}T)^{m-1}$ has no zero-sum subsequence of length $2m$.
\end{lemma}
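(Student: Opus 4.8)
The plan is to argue by contradiction. Suppose the conclusion fails for some $f \in \supp(T)$, i.e.\ that $(f^{-1}T)^{m-1}$ has a zero-sum subsequence $W$ of length $2m$; I will derive a contradiction. Since $|T^{m-1}| = 4m-4$ gives $|T| = 4$, the sequence $(f^{-1}T)^{m-1}$ has length $3m-3$, so for $m=2$ it has length $3 < 2m$ and no such $W$ exists; thus I may assume $m \ge 3$. The hypothesis that $T^{m-1}$ has no zero-sum subsequence of length $m = \exp(C_m^2)$ will be used to extract structural information about $T$ near $f$, after which a short computation rules $W$ out.

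The first real step is to pin down the local structure of $T$ at $f$. I would apply Lemma \ref{lem_general}.3 to $T^{m-1}$ with $g = f$: as $\vo_f(T^{m-1}) = (m-1)\vo_f(T) \ge m-1 \ge \lfloor (m-1)/2 \rfloor$ and $T^{m-1}$ has no zero-sum subsequence of length $m$, there is a subsequence $R \mid T^{m-1}$ with $|R| \ge (4m-4)-m+1 = 3m-3$ such that $(-f)+R$ has no short zero-sum subsequence. By Theorem \ref{thm_dir}, $\eta(C_m^2) = 3m-2$, so any sequence with no short zero-sum subsequence has length at most $3m-3$; hence $|R| = 3m-3$. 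Since $(-f)+R$ has no short zero-sum subsequence it cannot contain the term $0$, which forces $\vo_f(R) = 0$. Thus $R$ divides the $f$-free part of $T^{m-1}$, of length $(4-\vo_f(T))(m-1)$, and comparing lengths forces $\vo_f(T)=1$ and $R = (f^{-1}T)^{m-1}$. Writing $S = f^{-1}T$, this means $\supp(S)$ consists of three distinct elements each of multiplicity one, and $((-f)+S)^{m-1} = (-f)+R$ has length $3m-3$ with no short zero-sum subsequence. Lemma \ref{lem_noshort} then applies to $(-f)+S$, giving a basis $\{f_1,f_2\}$ of $C_m^2$ with $(-f)+S = f_1 f_2 (-xf_1+f_2)$; equivalently $\supp(S) = \{a,b,c\}$ with $a = f+f_1$, $b = f+f_2$, $c = f-xf_1+f_2$.

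The final computation is then routine. Write $W = a^\alpha b^\beta c^\gamma$; since $a,b,c$ occur with multiplicity one in $S$, we have $\alpha,\beta,\gamma \le m-1$, and $\alpha+\beta+\gamma = 2m$ forces $\alpha,\beta,\gamma \ge 2$. Using $mf = 0$ (as $f \in C_m^2$) and expanding in the basis, $\s(W) = (\alpha - \gamma x)f_1 + (\beta+\gamma)f_2$. As $\{f_1,f_2\}$ is a basis, $\s(W) = 0$ requires $\beta+\gamma \equiv 0 \pmod m$; since $0 < \beta+\gamma \le 2m-2 < 2m$, this forces $\beta+\gamma = m$ and hence $\alpha = m$, contradicting $\alpha \le m-1$. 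Therefore no such $W$ exists.

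I expect the main obstacle to be the structural step, not the arithmetic. The key points are: combining Lemma \ref{lem_general}.3 with the exact value $\eta(C_m^2) = 3m-2$ to pin $R$ down to length exactly $3m-3$; observing that the shift by $-f$ forbids the term $0$, which simultaneously forces $\vo_f(T)=1$ and identifies $R$ as the full power $(f^{-1}T)^{m-1}$; and thereby making $(-f)+R$ an exact $(m-1)$-st power to which Lemma \ref{lem_noshort} can be applied. Once the explicit basis representation of $S$ is secured, the zero-sum condition collapses to a transparent pair of congruences and the contradiction is immediate.
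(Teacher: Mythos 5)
Your argument is correct. Its first half is the same as the paper's, only written out in more detail: both proofs apply Lemma \ref{lem_general}.3 with $g=f$ to $T^{m-1}$, use $\eta(C_m^2)=3m-2$ to see that the guaranteed subsequence has length exactly $3m-3$, and observe that a translate without short zero-sum subsequence cannot contain $0$, which pins that subsequence down to $(f^{-1}T)^{m-1}$ (so $\vo_f(T)=1$) and shows that $-f+(f^{-1}T)^{m-1}$ has no short zero-sum subsequence; the paper compresses all of this into a single sentence. The two proofs then diverge. The paper concludes softly: by Theorem \ref{thm_dir} one has $\Do(C_m^2)=2m-1$, so every zero-sum sequence over $C_m^2$ of length $2m$ is non-minimal and therefore splits into two non-empty zero-sum sequences, one of which is short; hence the translated sequence has no zero-sum subsequence of length $2m$, and Lemma \ref{lem_general}.1 transfers this back to $(f^{-1}T)^{m-1}$. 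You instead invoke the classification of Lemma \ref{lem_noshort} to write $-f+f^{-1}T=f_1f_2(-xf_1+f_2)$ in a basis and exclude a zero-sum subsequence of length $2m$ by an explicit congruence computation, correctly using $2mf=0$ so as to work directly in the untranslated sequence. Both routes are valid and your arithmetic checks out (for $m=3$ the congruence $m\mid \beta+\gamma$ already fails outright, which is an even quicker contradiction than the case $\beta+\gamma=m$ you emphasize). The trade-off is that the paper's second step is a two-line consequence of the value of the Davenport constant and needs no structural information, whereas yours is heavier and depends additionally on Lemma \ref{lem_noshort}, but yields the explicit form of $T$ as a by-product.
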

\begin{proof}
Let $f \in \supp(T)$. By Lemma \ref{lem_general}, the sequence $(-f+(f^{-1}T)^{m-1})$ has no short zero-sum subsequence.
By Theorem \ref{thm_dir}, each zero-sum sequence over $C_m^2$ of length $2m$ is not minimal, implying it is the product of  two non-empty zero-sum sequences, and at least one of them is short.
Thus,  $(-f+(f^{-1}T)^{m-1})$ has no zero-sum subsequence of length $2m$, which by Lemma \ref{lem_general} implies that  $(f^{-1}T)^{m-1}$ has no zero-sum subsequence of length $2m$.
\end{proof}

\subsection{Details of the proof}

First, we briefly indicate that the listed sequences indeed have the claimed properties.

Then, we solve the inverse problem associated to $\eta(G)$ conditional on Property \textbf{C}, and do likewise for the one associated to $\so(G)$ conditional on Property \textbf{D}.

\subsubsection{Establishing the properties of the sequences}
In both cases, the statements regarding the length of the sequences are immediate by Theorem \ref{thm_dir}.

Let $S$ be a sequence as in the formulation of the respective result.

\noindent
(1) We have to show that $S$ has no short zero-sum subsequence.
Let $1\neq T \mid S$ be a zero-sum subsequence.
Suppose $S$ is of the form given in (a). It is clear that $\vo_{e_2}(T)+\vo_{-xe_1+e_2}(T)$ is not $0$ and divisible by $\ord e_2=mn$; thus, it is equal to $mn$. This implies that $m \nmid \vo_{-xe_1+e_2}(T)$. Consequently, $\vo_{e_1}(T)\neq 0$ and $|T|> mn$.
Now, suppose $S$ is of the form given in (b). Since $ag_1 \notin \langle g_2 \rangle$ for $|a| \in [1,m-1]$, we have $\vo_{g_1}(T)=\vo_{-g_1+g_2}(T)$. Thus, $\vo_{-g_1+g_2}(T)+\vo_{g_2}(T)$ is divisible by $\ord g_2=mn$. So, we have $\vo_{-g_1+g_2}(T)\neq 0$ and $|T|> mn$. Thus, $S$ has no short zero-sum subsequence.

\noindent
(2) By Lemma \ref{lem_general} we may assume that $g=0$.
If $S$ is of the form given in (b), then we know by (1) that the sequence $g_1^{m-1} g_2^{mn -1} (-g_1+g_2)^{m-1}$ has no short zero-sum subsequence.
Similarly, if $S$ is of the form given in (a), we note that by the argument in (1), each zero-sum subsequence $T$ of $S$ with $\vo_{e_2}(T)+\vo_{-xe_1+e_2}(T)>0$ is not short.
Since neither $0^{mn-1}$ nor $0^{t m -1 }e_1^{(n +1- t ) m-1}$ has a zero-sum subsequence of length $mn$, it follows in both cases that $S$ has no zero-sum subsequence of length $mn$.

\subsubsection{Proof of Theorem \ref{thm_main}.1}
We start with some observations.

The case $n=1$ is an immediate consequence of Lemma \ref{lem_noshort}.

We thus assume $n \ge 2$, that is  $G\cong C_{m}\oplus C_{mn}$ with $m\ge 2$ and $n \ge 2$. Furthermore, let $H = \{mg \colon g \in G\} \cong C_n$ and let $\varphi : G \to G/H$ be the canonical map; we have $G/H \cong C_m^2$. We apply the Inductive Method, as in \cite[Section 8]{gaogeroldinger99}, with the exact sequence
\[0 \to H \hookrightarrow G \overset{\varphi}{\to} G/H \to 0,\]
partly using arguments similar to those in  \cite{gao00} and \cite{WAS20}.

Now, let $S \in \fc(G)$ be a sequence of length $\eta(G)-1$ with no short zero-sum subsequence.
We have to show that $S$ is of the claimed form.

We start our argument by showing that $|\supp(S)|=3$ and  already  obtain a somewhat more precise result on $S$ in the process of doing so (see \eqref{eq_C}).
Since $\eta(G/H)= 3m-2$ and $|S|= (n-1)m + 3m-3$, we know that there exist subsequences $S_1, \dots, S_{n-1}$ of $S$ such that $\prod_{i=1}^{n-1} S_i \mid S$ and each $\varphi(S_i)$ is a short zero-sum sequence over $G/H$, i.e., $\s(\varphi(S_i))=0$ and $|\varphi(S_i)|\le m$. Let $R\in \fc(G)$ such that $S= R \prod_{i=1}^{n-1} S_i $.
If $\s(S_1)\dots \s(S_{n-1})$ has a (short) zero-sum subsequence, say $\s( \prod_{i\in I} \s(S_i))=0$ for some $\emptyset \neq I \subset [1,n-1]$, then $\s( \prod_{i\in I} S_i)=0$ and $|\prod_{i\in I} S_i|\le |I|m \le mn$, a contradiction.
Thus, this sequences has no zero-sum subsequence and consequently by Theorem \ref{thm_invcyc} $\s(S_1)= \dots =\s(S_{n-1})= e$ where $\langle e \rangle = H$.
Moreover, by the above reasoning it follows that $\varphi(R)$ does not have a short zero-sum subsequence.
Thus, $|R| \le  3m-3$ and it follows that $|S_i|= m$ for each $i\in [1,n-1]$ and $|R|=3m-3$. Since $C_m^2$ has Property \textbf{C}, $\varphi(R)=T^{m-1}$ for
some sequences $T \in \fc(G/H)$ with $|T|=3$.

We show that $\supp(\varphi(S))= \supp (\varphi (R))$; more precisely, we show that  each $\varphi(S_i)$ is equal to $f^m$ for some $f \in \supp(T)$.
Assume this is not the case, say $\varphi(S_1)$ is not of this form.
We show that $\varphi(S_1R)$ is divisible by the product of two short zero-sum sequences, which by the above argument yields a contradiction.

First, assume  $f\mid \varphi(S_1)$ for some $f \in \supp(T)$. Then $f^m\mid \varphi(S_1R)$ and $f^{-m}\varphi(S_1R)= (f^{-1}\varphi(S_1)) (f^{-1}T)^{m-1}$.
Since by assumption $\varphi(S_1)\neq f^m$, it follows that $|\supp(f^{-1}\varphi(S_1))|\ge 2$. Thus, $f^{-m}\varphi(S_1R)$ contains an element with multiplicity at least $m$ or  its support contains at least $4$ distinct elements; in both cases, in the latter using the fact that its length is $3m-3$ and $C_m^2$ has Property \textbf{C},  it has a short zero-sum subsequence.

Second, assume $\supp(\varphi(S_1)) \cap \supp(\varphi(R))= \emptyset$. Let $f' \mid \varphi(S_1)$. The sequence $f' \varphi(R)$ has a short zero-sum subsequence $U$. We know that $f' \mid U$ and thus, since $f' \notin \supp(T)$, we have $f^{m-1}\nmid U$ for each $f \in \supp(T)$. Therefore, $\supp(U^{-1}f' \varphi(R))= \supp(\varphi(R))$ and $|\supp(U^{-1}\varphi(S_1R))|\ge 4$. This implies the existence of a short zero-sum subsequence of $U^{-1}\varphi(S_1R)$.

Now, we show that $|\varphi^{-1}(f)\cap \supp (S)|= 1$ for each $f \in \supp(\varphi(S))$.
Assume not. Let $g, g' \in \supp (S)$ be distinct elements such that $\varphi(g)= \varphi(g')=f$.

First, suppose $f \in \supp(\varphi(\prod_{i=1}^{n-1}S_i))$.
We may assume that $g'\mid R$ and, say,  $g \mid S_1$.
We have $\s(S_1)=e$. Let $S_1'=g^{-1}g'S_1$ and $R'= g'^{-1}gR$.
As above, we know that $\s(S_1')\s(S_2)\dots \s(S_{n-1})$ has no zero-sum subsequence. Thus, it follows that $\s(S_1')=e$ (for $n=2$ this is the only generating element of $H$). Yet, $\s(S_1') = \s(S_1)+g'-g\neq \s(S_1)$, a contradiction.

Second, suppose $g, g' \in \supp(R)$. This implies $m \ge 3$.
The sequence $g^{-1}R$ has a subsequence $V$ such that $\varphi(V)$ is a minimal zero-sum sequence, thus in particular $|V|\le 2m-1$.
Since $(f^{-1}T)^{m-1}$, for $f \in \supp (T)$,  is zero-sum free (see Lemma \ref{lem_noshort}), we have $\supp(\varphi(V))= \supp(\varphi(R))$ and thus we may assume that $g'\mid V$. If $\s(U)\neq e$, then $\s(S_1)\dots \s(S_{n-1})\s(U)$ has a zero-sum subsequence of length at most $n-1$, yielding a zero-sum sequence of $S$ of length at most $(n-2)m+ 2m-1\le mn$, a contradiction. Thus, we have $\s(V)=e$. Yet, the same is true for $\s(g'^{-1}gV)$, a contradiction.

So, we know that
\begin{equation}
\label{eq_C}
S= g_1^{s_1m -1}g_2^{s_2 m -1}g_3^{s_3m -1}
\end{equation}
 with $s_i \in [1, n]$ and $s_1+s_2+s_3=n+2$, in particular $|\supp(S)|=3$.

We recall that by Lemma \ref{lem_noshort} $\supp (\varphi(S))= \supp(\varphi(R))= \{f_1, f_2, -x f_1 + f_2\}$ for a basis $f_1, f_2$ and some $x\in [1, m]$  with $\gcd (x, m)=1$ and $x \le m/2$. Say, $\varphi(g_i)=f_i$ for $i\in [1,2]$.
We note that if $s_i\ge 2$, then $mg_i= \s(g_i^m)=e$, in particular $\ord g_i = mn$.

For $a \in [1,m-1]$, let $R_a= g_1^{[xa]_m} g_2^{m-a} g_3^a$. Then $\varphi(R_a)$ is a zero-sum subsequence of $\varphi(R)$ of length at most $2m-1$.
Thus, as above, we conclude $\s(R_a)=e$ for each $a\in [1, m-1]$.

Considering $a=1$ we have
\begin{equation}
\label{eq_a=1}
x g_1 + (m-1) g_2 + g_3 = e
\end{equation}
and considering $a= m-1$  we have
\begin{equation}
\label{eq_a=m-1}
(m-x) g_1 + g_2 + (m-1)g_3 = e.
\end{equation}

Now, we assume $m \ge 3$ and complete the argument. At the end we consider $m=2$.
Considering $a=2$, we get
\begin{equation}
\label{eq_a=2}
2x g_1 + (m-2)g_2+ 2g_3=e.
\end{equation}
Thus, considering the difference of \eqref{eq_a=2} and \eqref{eq_a=1} we get
\begin{equation}
\label{eq_g1g2g3}
xg_1 - g_2 + g_3=0.
\end{equation}
Moreover, considering the difference of \eqref{eq_a=2} and two times \eqref{eq_a=1} we get $mg_2=e$, in particular $\ord g_2=mn$, and combining this with the sum of \eqref{eq_a=1} and \eqref{eq_a=m-1} we get $mg_1+mg_3=e$.
Note that $\{ g_i, g_2\}$ for $i\in \{1,3\}$ is a generating set of $G$, since $bg_i \notin \langle g_2\rangle$ for $b\in [1, m-1]$.

First, suppose $x \neq 1$. Then $1\le \lceil m/x \rceil< m$. Let $r= [\lceil m/x \rceil x]_m= \lceil m/x \rceil x-m$.
Considering $a=\lceil m/x \rceil$, we get
\begin{equation} rg_1  + (m-\lceil m/x \rceil)g_2 +\lceil m/x \rceil g_3=e\end{equation}
and thus
$rg_1  -\lceil m/x \rceil g_2 + \lceil m/x \rceil g_3=0$ and
$(\lceil m/x \rceil x-m)g_1  - \lceil m/x \rceil g_2 +\lceil m/x \rceil g_3=0$.
Using \eqref{eq_g1g2g3}, we get $mg_1 = 0$ and thus $s_1=1$.  Moreover, it follows that $\{g_1, g_2\}$ is a basis of $G$ and by \eqref{eq_g1g2g3} the sequence is of the form given in (a).

Second, suppose $x=1$. If $s_1=s_3=1$, the sequence is of the form given in (b), since by \eqref{eq_g1g2g3} $g_3= -g_1+g_2$.
If $s_3\ge 2$, then $mg_3=e$ and $mg_1= 0$, implying  that  $\{g_1, g_2\}$ is a basis of $G$, completing the argument.
Similarly, if $s_1 \ge 2$, then $mg_1= e$ and $mg_3=0$. Now, $\{g_3, g_2\}$ is a basis of $G$ and $g_1= -g_3+g_2$, again completing the argument.

Finally, we suppose $m=2$.  Then $x=1$ and $g_1+g_2+g_3=e$.
Let $\{i, j, k\}=\{1,2,3\}$ such that $s_i \ge 2$. Then $2g_i= e$ and $-g_i+g_j+g_k=0$.
If $s_j\ge 2$, then $2g_j=e$, and $2g_k=0$. It follows that $\{g_k, g_i\}$ is a basis of $G$ and $g_j = -g_k +g_i$.
If $s_j=s_k=1$, we have $g_j= -g_k+g_i$ and $\{g_k, g_i\}$ is a generating set.
This completes the proof of part 1.

\subsubsection{Proof of Theorem \ref{thm_main}.2}

As for part 1, the case $n=1$ is immediate, by Lemma \ref{lem_general} and part 1. We thus assume again $n\ge 2$, and use the same exact sequence as in the proof of part 1. Also, other parts of the argument are similar to the one for part 1, we keep those parts brief.

Let $S \in \fc(G)$ be a sequence of length $\so(G)-1$ with no zero-sum subsequence of length $\exp(G)$.
Again, we start by considering $\supp(S)$, this time showing that $|\supp(S)|=4$.
Since $\so(G/H)= 4m-3$ and $|S|= (2n-2)m + 4m-4$. We know that there exist subsequences $S_1, \dots, S_{2n-2}$ such that $\prod_{i=1}^{n-1} S_i \mid S$ and each $\varphi(S_i)$ is a zero-sum sequence of length $m$. Let $R\in \fc(G)$ such that $S= R \prod_{i=1}^{2n-2} S_i$.
We note that $\s(S_1)\dots \s(S_{2n-2})$ has no zero-sum subsequence of length $n$. Thus, by Theorem \ref{thm_invcyc} we know that it is equal to $(e'(e'+e))^{n-1}$ where $\langle e \rangle = H $, say $\s(S_i)=(e'+e)$ for $i \in [1, n-1]$.

Moreover, $\varphi(R)$ has no zero-sum subsequence of length $m$. Since $|R|= 4m-4$ and $C_m^2$ has Property \textbf{D}, $\varphi(R)=T^{m-1}$ for some $T \in \fc(G/H)$.
Analogously to the proof of Theorem \ref{thm_main}.1, using that $C_m^2$  has Property \textbf{D}, it can be seen that if, for some $i \in [1, 2n-2]$, $\varphi(S_i)\notin \{f^m\colon f \in \supp(T)\}$, then $\varphi(RS_i)$ is divisible by the product of two zero-sum sequences of length $m$, yielding a contradiction. Thus, $\supp(\varphi(S))= \supp (\varphi (R))$ and  each $\varphi(S_i)$ is equal to $f^m$ for some $f \in \supp(T)$.

Now, we show that $|\varphi^{-1}(f)\cap \supp (S)|= 1$ for each $f \in \supp(\varphi(S))$.
Assume not. If $f \in \supp(\varphi(\prod_{i=1}^{2n-2}S_i))$, then this can be seen similarly to the proof of Theorem \ref{thm_main}.1.
Suppose $g, g' \in \supp(R)$ are distinct but $\varphi(g)= \varphi(g')=f$. This implies $m\ge 3$.
The sequence $g^{-1}R$ has a  subsequence $U$ of length $2m$ such that $\s(\varphi(U))=0$; this follows by Theorem \ref{thm_dir}, since in view of $\so_{m \N}(C_m^2)= 3m-2$, we get a sequence of length $m$ or $2m$, and by assumption it cannot have length $m$.
By Lemma \ref{lem_2m}, we may assume that it contains $g'$.
We show that $\s(U)= 2e'+e$. Assume not, say $\s(U)=2e'-ae$ with $a\in [0,n-2]$. We consider $US_1 \dots S_{a}S_{n} \dots S_{2n-a-3}$. The sum of this sequence is $(2e'-ae) + a(e'+e)+ (n-a-2)e'=0$ and its length is $2m + a m + (n-a-2)m=mn$, a contradiction.
Yet, by the same argument $\s(g'^{-1}gU)=2e'+e$, a contradiction.

Thus, we know $S= g^{s_0m-1}h_1^{s_1m -1}h_2^{s_2 m -1}h_3^{s_3m -1}$ with $s_i \in [1, n]$ and $s_0+s_1+s_2+s_3=2n+2$.

Without restriction we assume that $s_0$ is maximal. We have $s_0 m-1\ge m(n+1)/2 -1 \ge  \lfloor (mn-1)/2 \rfloor$.
Thus, by Lemma \ref{lem_general}, $(S-g)= 0^{s_0m-1} R T$ where $T$ is a sequence of length $mn +2m -3$ with no short zero-sum subsequence.
By Theorem \ref{thm_main}.1 we know all possible structures of $T$. It remains to determine $s_0$ and $R$.
If $T=g_1^{m-1} g_2^{mn -1} (-g_1+g_2)^{m-1}$ for some generating set $\{g_1,g _2\}$, we get, since $s_0$ is maximal,  that $s_0= n$ and
consequently $R=1$, implying  that $S$ is of the form given in (b).

Thus, it remains to consider   $T=e_1^{m-1} e_2^{km -1} (-xe_1+e_2)^{(n-k)m-1}$ for a basis $\{e_1, e_2\}$.
We note that if $\vo_{e_2}(RT)+\vo_{-xe_1+e_2}(RT)\ge mn + m -1$, then
$\lfloor \vo_{e_2}(RT)/m \rfloor + \lfloor\vo_{-xe_1+e_2}(RT)/m \rfloor \ge n$.
Yet, if this is the case, then $RT$ has a zero-sum subsequence of length $mn$, implying that $(g+ RT) \mid S$ has a zero-sum subsequences of length $mn$.
Consequently $\vo_{e_2}(RT)+\vo_{-xe_1+e_2}(RT)\le mn + m -2$ and thus $R= e_1^{(n-s_0)m}$, implying that $S$ is of the form given in (a). This completes the proof of part 2.

\subsection*{Funding}
This work is supported by the Austrian Science Fund FWF [grant numbers P18779-N13 and J2907-N18].


\end{document}